\def\ad{{\rm ad}}
\def\alg{{\mathcal A}}
\def\cpt{{\mathcal K}}
\def\torus{{\bf T}}
\def\wh#1{\widehat{#1}}
\def\torus{{\bf T}}
\def\integer{{\bf Z}}
\def\cH{{\mathcal H}}
\def\alg{{\mathcal A}}
\def\bdd{{\mathcal B}}
\def\cpt{{\mathcal K}}
\def\CT{{\rm CT}}
\def\wt#1{\widetilde{#1}}
\newcommand{\what}{\widehat}
\newcommand{\RR}{{\bf R}}
\newcommand{\PUH}{{\sf PU}({\mathcal H})}
\newcommand{\TT}{{\bf T}}
\newcommand{\ZZ}{{\bf Z}}
\newcommand{\cK}{{\mathcal K}}
\newcommand{\thm}[1]{\advance\count32 by 1\bigskip\noindent{\bf #1 \the\count31.\the\count32.}}
\newcommand{\quod}{\hfill\qed\bigskip}%
\theoremstyle{plain}
\newtheorem{theorem}{Theorem}[section]
\newtheorem{proposition}[theorem]{Proposition}
\newtheorem{corollary}[theorem]{Corollary}
\theoremstyle{definition}
\newtheorem{definition}[theorem]{Definition}
\newtheorem{definition*}{Definition}
\newtheorem{example}[theorem]{Example}
\theoremstyle{remark}
\newtheorem{remarks*}{Remarks}
\begin{document}

\title[Nonassociative strict deformation quantization of $C^*$-algebras]
{Nonassociative strict deformation quantization of $C^*$-algebras and
nonassociative torus bundles}

\author[KC Hannabuss]{Keith C. Hannabuss}

\address[Keith Hannabuss]{
Mathematical Institute \\
24-29 St. Giles' \\
Oxford, OX1 3LB\\
and Balliol College \\
Oxford, OX1 3BJ\\
England} \email{kch@balliol.ox.ac.uk}

\author[V Mathai]{Varghese Mathai}

\address[Varghese Mathai]{
Department of Pure Mathematics\\
University of Adelaide\\
Adelaide, SA 5005\\
Australia}
\email{mathai.varghese@adelaide.edu.au}

\begin{abstract}
In this paper, we initiate the study of nonassociative strict deformation
quantization of $C^*$-algebras with a torus action. We shall also present a
definition of nonassociative principal torus bundles, and give a classification
of these as nonassociative strict deformation quantization of ordinary
principal torus bundles. We then relate this to T-duality of principal torus
bundles with $H$-flux. In particular, the Octonions fit nicely into our theory.
\end{abstract}

\thanks{This research was supported under Australian Research Council's Discovery
Projects funding scheme (project number DP110100072).
Both authors
thank Peter Bouwknegt for discussions during previous collaborations}

\keywords{Nonassociative strict deformation quantization, nonassociative torus, 
nonassociative torus bundles, T-duality}

\subjclass[2010]{Primary 46L70 Secondary:  47L70, 46M15,18D10,  46L08,
46L55} 

\maketitle


\section*{Introduction}


We shall present a definition of nonassociative principal torus
bundles, inspired in part by the work on noncommutative
principal torus bundles \cite{ENOO, HM09, HM10}, and by our
earlier work with Bouwknegt, \cite{BHM3, BHM4}, on
nonassociative $C^*$-algebras that were T-duals of spacetimes that are principal torus bundles with background H-flux.
In the previous work (with Bouwknegt), we were unable to give an independent geometric description
of these nonassociative algebras, 
however here, we achieve such a description in many interesting
special cases via nonassociative strict deformation quantization initiated in this paper.
More detailed motivation for this paper arising from string theory is explained in Section 2.

Associative operator theoretic deformation quantization was studied in detail
by Rieffel \cite{Rieffel1} (see the references therein), who called it
{\em strict} deformation quantization, mainly to distinguish it from {\em formal} deformation
quantization. Formal nonassociative deformation quantization has been previously studied
in the mathematical physics literature, cf. \cite{CS, HKK, NS, Paal}, but as far as we know, ours
 is the first paper to analyse {\em strict} nonassociative deformation quantization via
nonassociative operator algebras or $C^*$-algebras in tensor categories \cite{BHM3, BHM4}.
More precisely, we define and study nonassociative strict deformation
quantization of $C^*$-algebras with a torus action, and apply it to show that
some of the T-duals of spacetimes with H-flux that are principal torus bundles (of rank $\ge 3$),
are nonassociative strict deformation
quantization of principal torus bundles (with basic H-flux), thereby giving a satisfactory
description of the T-dual at least in these cases.

In \cite{BHM3, BHM4} we showed how a natural definition of
T-duals in terms of crossed products could lead to
nonassociative algebras. For a locally compact group $G$ and a
 Moore 3-cocycle  $\phi \in Z^3(G,\torus)$ (or more conveniently
 an antisymmetric tricharacter \cite{BHM3}), the twisted compact operators
 $\cpt_\phi(L^2(G))$ provide a simple example of such an
 algebra: one starts by giving the integral kernels $K_j$ defining compact
 operators a  deformed product
$$
(K_1\star K_2)(x,z) = \int_G \phi(x,y,z)K_1(x,y)K_2(y,z)\,dy.
$$
More generally, nonassociativity can occur for an algebra
$\alg$ whenever $C_0(G)$ acts on $\alg$, so that $\phi\in
C_0(G\times G\times G)$ acts on $(\alg\otimes\alg)\otimes
\alg$. Using this to identify $(\alg\otimes\alg)\otimes \alg$
with $\alg\otimes(\alg\otimes \alg)$ leads to nonassociativity
in products of three elements in $\alg$. (We shall refer to
this as $\phi$-nonassociativity.) In the case of the kernels we
can take the action of $f\in C_0(G)$ to be $(f.K)(z,w) =
f(zw^{-1})K(z,w)$, \cite[Section 10]{BHM3}. An action $\beta$
(of $G$) by automorphisms of $\alg$ is consistent with this if
$\beta_x[f.a] = \rho_x[f].\beta_x[a]$ where $\rho_x[f](y) =
f(xy)$.

One of the aims of this paper is to show that the
nonassociative bundles of \cite{BHM3} have a characterisation
analogous to that of the noncommutative principal bundles of
\cite{ENOO,HM10}. In fact, we shall see that this automatically
allows for noncommutativity as well.

The paper starts with a section showing that nonassociative
torus algebras can be viewed as nonassociative strict
deformation quantizations of their associative counterparts,
and Section 2 relates these nonassociative torus algebras to T-duality, which is a fundamental
symmetry in string theory. In Section 3 we
introduce a definition of nonassociative principal torus
bundles and show in the next Section 4 that the bundles
encountered in \cite{BHM3} are examples of such bundles. This
involves an interesting new variant of the Takai duality
Theorem, which combines duality with MacLane's notion of
strictification to strictly associative categories. The
nonassociative principal torus bundles are then classified in
Section 5 by a minor modification of the classification of
noncommutative principal torus bundles \cite{ENOO,HM10}.
Finally, in the appendix we show that the Octonions (stabilized) can also be recovered as a
nonassociative strict deformation quantization.


\section{Nonassociative torus and nonassociative strict deformation quantization}
\label{sect:natorus}


In this section, given a $T-C^*$-algebra $\alg$, where $T$ is a torus,  together with a circle valued 3-cocycle $\phi$
on $\what T$, we will define the  {nonassociative strict deformation quantization}
$\alg_\phi$ of $\alg$, which is a $C^*$-algebra in a tensor category with associator equal to $\phi$, see \cite{BHM4}. 
The nonassociative torus $T_\phi$, introduced in \cite{BHM3}, is an example of this construction.

Let $T=V/\Lambda$ be a torus, written as the quotient of a vector group $V$ and a maximal rank lattice $\Lambda$, and $\what T$ denote the Pontryagin dual of
$T$. Let $\phi \in Z^3(\what T, \TT)$ be a $\TT$-valued
3-cocycle on $\what T$.

Following the constructions of \cite[Section 4]{BHM3}, consider
the unitary operator $u(\beta, \gamma)$ acting on $L^2(\what T)$
given by
\begin{equation*}
(u(\beta, \gamma) \psi)(\alpha) = \phi(\alpha, \beta, \gamma) \psi(\alpha)
\end{equation*}
for all $\psi \in L^2(\what T)$. We easily verify that
\begin{equation*}
\phi(\alpha, \beta, \gamma)  u(\alpha, \beta) u(\alpha\beta, \gamma)  =
\xi_\alpha[u(\beta, \gamma)]  u(\alpha, \beta\gamma)
\end{equation*}
where $\xi_\alpha = {\rm ad}(\rho(\alpha))$ and
$(\rho(\alpha)\psi)(g) = \psi(g\alpha)$ is the right regular
representation. One can define, as was done in \cite{BHM3,
BHM4}, a twisted convolution product and adjoint on $C(\what T,
\cK)$, where $\cK = \cK(L^2(\what T))$,  by
\begin{equation}
(f*_\phi g)(x) = \sum_{y\in \Lambda} f(y) \xi_y[g(y^{-1}x)] u(y, y^{-1}x)  \,,
\end{equation}
and
\begin{equation}
f^*(x) = u(x, x^{-1})^{-1} \xi_x[f(x^{-1})]^* \,.
\end{equation}
The operator norm completion is the nonassociative twisted
crossed product C$^*$-algebra known as the {\em nonassociative
torus} which was first defined in \cite{BHM3},
\begin{equation*}
 T_\phi= \cK(L^2(\what T)) \rtimes_{\xi, u} \what T.
\end{equation*}
When $\phi$ is trivial, the nonassociative torus specializes to
the continuous functions on the torus $T$, stabilized.

Our next goal is to extend this construction to general
$C^*$-algebras with a $T$-action. Let $\alg$ be a $C^*$-algebra
with a continuous action $\alpha$ of $T$ and $\phi \in
Z^3(\what T, \TT)$ be a $\TT$-valued 3-cocycle on $\what T$.
Then we define the {\em nonassociative strict deformation
quantization} of $\alg$, denoted $\alg_\phi$ as follows.

We have the direct sum decomposition,
$$
\begin{array}{lcl}
\alg \otimes \cK&\cong & \widehat\bigoplus_{\chi \in \what T} \alg_\chi \otimes \cK
 \\
\qquad {\bf a} &=& \sum_{\chi \in \what T} {\bf a_\chi},
\end{array}
$$
where $\cK = \cK(L^2(\what T))$ and for $\chi\in\widehat{T}$,
$$
\alg_\chi:= \left\{a\in \alg \mid
  \alpha_t(a)=\chi(t)\cdot a\,\,
 \,\,\,   \forall\, t\in T \right\}.
$$

Since $T$ acts by $\star$-automorphisms, we have
\begin{equation*}
  \label{eq:Fell_bundle_algebra}
  \alg_\chi\cdot \alg_\eta \subseteq \alg_{\chi\eta}
  \quad {\rm and} \quad \alg_\chi^*=\alg_{\chi^{-1}}
  \qquad  \forall\, \chi,\eta\in\widehat{T}.
\end{equation*}

The completion of the direct sum is explained as follows.\\
The representation theory of $T$ shows that
\(\bigoplus_{\chi\in\widehat{T}} \alg_\chi\) is a
$T$-equivariant dense subspace of~\(\alg\), where $T$ acts on $
\alg_\chi$ as follows: $\what \alpha_t(a_\chi) = \chi(t) a_\chi$
for all $t\in T$. Then $\widehat\bigoplus_{\chi \in \what T}
\alg_\chi$ is the completion in the $C^*$-norm of $\alg$.
Let
$$
(\alg\otimes\cK)^{alg} = \bigoplus_{\chi \in \what T} \alg_\chi \otimes \cK .
$$

The product on  $\alg\otimes\cK$  then also decomposes as,
$$
({\bf a}{\bf  b})_\chi= \sum_{\chi_1\chi_2=\chi} {\bf a}_{\chi_1} {\bf b}_{\chi_2}
$$
for $\chi_1,\chi_2, \chi \in \widehat T$. The product can be
deformed to a nonassociative product $\star_\phi$ on
$\alg\otimes\cK$ by setting
$$
({\bf a}\star_\phi {\bf b})_\chi = \sum_{\chi_1\chi_2=\chi} {\bf a}_{\chi_1}
\xi_{\chi_1}[{\bf b}_{\chi_2}] u(\chi_1, \chi_2).
$$

We next describe the norm completion of the deformed algebra.
Let $\cH_1$ denote the universal Hilbert space representation of the
$C^*$-algebra $\alg\otimes\cK$ (with its usual product) which one obtains via the GNS theorem. By considering instead the Hilbert space $\cH=\cH_1 \otimes L^2(T) \otimes \cH_2$, where $\cH_2$ is an infinite dimensional Hilbert space, where we note that every character of $T$ occurs with infinite multiplicity in $L^2(T) \otimes \cH_2$, we obtain a $T$-equivariant embedding $\varpi:\alg\otimes\cK \to B(\cH)$. The equivariance means that
$$
\varpi({\bf a}_\chi) = \varpi({\bf a})_\chi.
$$
Now consider the action of $(\alg\otimes\cK)^{alg}$ on $\cH$ given by the deformed product $\star_\phi$, 
that is, for ${\bf a} \in (\alg\otimes\cK)^{alg}$ and $\Psi \in \cH$,
$$
({\bf a}\star_\phi \Psi)_\chi = \sum_{\chi_1\chi_2=\chi} \varpi({\bf a}_{{\chi}_1})\xi_{\chi_1}[\Psi_{\chi_2}]u(\chi_1, \chi_2).
$$
The operator norm completion of this action is by definition the  {\em nonassociative strict deformation quantization}
$\alg_\phi$ of $\alg$. It is a $C^*$-algebra in a tensor category with associator equal to $\phi$, see \cite{BHM4}.
Let $\alg= C(T)$, which is a $T-C^*$-algebra where $T$ acts on itself by translation. Let $\phi$ be a circle valued 
3-cocycle on $\what T$. Then $C(T)_\phi \cong C^*(\what T)_\phi$, is just the {\em nonassociative torus}, also denoted by $T_\phi$, which was previously described. 


We might ask whether this can be understood using the Landstad--Kasprzak approach, as in \cite{HM10}. (A crossed product $\bdd= \alg\rtimes_\alpha G$ of a C$^*$ algebra $\alg$ by a locally compact abelian group $G$ acting as automorphisms by $\alpha:G \to
{\rm Aut}(\alg)$ has a dual action $\wh{\alpha}$ of the dual group $\wh{G}$. Landstad noticed that the crossed product also has a coaction of $G$ given by a homomorphism $\lambda: G \to {\mathcal UMB}$ to the unitaries in the multiplier algebra, such that $\wh{\alpha}(\xi)(\lambda(g)) = \xi(g)\lambda(g)$ for all $g\in G$ and $\xi \in \wh{G}$; moreover, the existence of such related $\wh{G}$- action and $G$-coaction on a C$^*$-algebra $\bdd$, characterises $\bdd$ as a crossed product. Kasprzak noticed that for such an algebra $\bdd$ and an antisymmetric bicharacter $u$ defining a 2-cocycle on $\wh{G}$ one can obtain a deformed action $\wh{\alpha}^u(\xi) = \lambda(u(\xi,\cdot))\wh{\alpha}(\xi)$, also satisfying Landstad's conditions. Consequently there must exist a deformed algebra $\alg_u$, such that $\alg_u\rtimes_{\alpha_u} G \cong \bdd \cong \alg\rtimes_{\alpha} G$, where $\alpha_u$ is the dual to $\wh{\alpha}^u$.) However, if we followed precisely the route in \cite{BHM3}, then a nonassociative algebra $\alg^u$ would have a nonassociative crossed product $\bdd$ and we could not possibly have $\alg^u\rtimes_{\alpha^u}G \cong \bdd \cong \alg\rtimes_{\alpha}G$. Of course, the constructions of \cite[Appendix A]{BHM3}, which we discuss below, allow us to strictify the product in the sense of category theory, that is to remove the nonassociativity. It does seem likely that a more sophisticated approach along those lines could work, but we leave that for future investigation.

We should also note that this notion of deformation is not
quite the usual one since the condition that the restriction of
$\phi$ to $\Lambda^\perp \times\Lambda^\perp
\times\Lambda^\perp$ is trivial builds in a certain rigidity (where $\Lambda^\perp \subseteq \what{V}$ is the set of characters trivial on $\Lambda$).


\section{The relation to T-duality}


We begin by summarizing T-duality for principal torus bundles with H-flux.
T-duality, also known as target space duality, plays an important role in string theory
and has been the subject of intense study for many years.
In its most basic form, T-duality relates a string theory compactified
on a circle of radius $R$, to a string theory compactified on the dual circle of radius $1/R$
by the interchange of the string momentum and winding numbers.
T-duality can be generalized to locally defined circles (principal circle bundles, circle fibrations),
higher rank torus bundles or fibrations, and, in the presence of a background H-flux which is
represented by a closed, integral \v Cech 3-cocycle $H$ on the spacetime manifold
$Y$, it is closely related to mirror symmetry
through the SYZ-conjecture, \cite{SYZ}.

A striking feature of T-duality is that it can relate topologically distinct spacetime manifolds by the
interchange of topological characteristic classes with components of the H-flux.
Specifically, denoting by $(Y,[H])$ the pair of an (isomorphism class of) principal
circle bundle $\pi: Y\to X$, characterized by the first Chern class $[F]\in H^2(X,\ZZ)$
of its associated line bundle,  and an H-flux $[H] \in H^3(Y,\ZZ)$, the T-dual again
turns out to be a pair $(\widehat Y,[\widehat H])$, where the principal circle bundles

$$\xymatrix{
\TT  \ar[r] &  Y \ar[d]_{\pi}  \\
& X}, \qquad \qquad \qquad \qquad
\xymatrix{
\TT  \ar[r] &  \widehat Y \ar[d]_{\widehat\pi}  \\
& X}
$$
are related by
$[\widehat F] = \pi_*[H] ,\  [F] = \widehat \pi_* [\widehat H]$,
such that on the correspondence space
$$
\xymatrix @=4pc @ur { Y \ar[d]_{\pi} &
Y\times_X  \widehat Y \ar[d]_{\widehat p} \ar[l]^{p} \\ X & \widehat Y\ar[l]^{\widehat \pi}}
$$
we have $p^* [H] - \widehat p^*[\widehat H] = 0$ \cite{BEM03a,BEM03b}.

In earlier papers we have argued that the twisted K-theory $K^\bullet (Y,[H])$
(see, e.g., \cite{BCMMS01})
classifies charges of D-branes on $Y$ in the background of H-flux $[H]$ \cite{BM00},
and indeed, as a consistency check, one can prove that T-duality gives an isomorphism
of twisted K-theory (and the closely related twisted cohomology $H^\bullet(Y,[H])$ by
means of the twisted Chern character $ch_H$ depicted by the vertical maps in the diagram below) 
\cite{BEM03a}
$$\xymatrix{
K^\bullet(Y, [H])  \ar[r]^{T_!} \ar[d]^{ch_H} &
      K^{\bullet+1}(\widehat Y, [ \widehat H]) \ar[d]^{ch_{\widehat H} } \\
H^\bullet  (Y, [H])    \ar[r]^{T_*} &   H^{\bullet+1} (\widehat Y, [\widehat H])
}
$$
where the top horizontal map is the T-duality isomorphism in (twisted) K-theory and 
the bottom horizontal map is the T-duality isomorphism in (twisted) de Rham cohomology.
The above considerations were generalized to principal torus bundles in
\cite{BHM3,BHM4}.

Since the projective unitary group of an infinite dimensional Hilbert space
$\PUH$ is a model for $K(\ZZ,2)$, we can `geometrize' the H-flux in terms of (an
isomorphism class of) a principal $\PUH$-bundle $P$ over $Y$.  We can reformulate the
discussion of T-duality above in terms of noncommutative geometry as follows.
The space of continuous sections vanishing at infinity, $\alg = C_0(Y,\mathcal E)$,
of the associated algebra bundle of compact operator $\cK$ on the Hilbert space
$\mathcal E = P \times_{\PUH} \mathcal K$, is a stable, continuous-trace, C$^*$-algebra
with spectrum $Y$, and has the property that it is locally Morita equivalent to continuous
functions on $Y$. Thus the $H$-flux has the effect of making spacetime noncommutative.
The K-theory of $\alg$ is just the twisted K-theory $K^\bullet (Y,[H])$.
The $\TT$-action on $Y$ lifts essentially uniquely to an $\RR$-action
on $\alg$.  In this context T-duality is the operation of taking the
crossed product $\alg \rtimes \RR$, which turns out to be another continuous
trace algebra associated to $(\widehat Y, [\widehat H])$ as above. A fundamental property of T-duality is that
when it is applied twice, it yields the original algebra $\alg$, and the reason that it works in this
case is due to Takai duality.   The isomorphism of the D-brane charges
in twisted K-theory is, in this context, due to the Connes-Thom isomorphism.
These methods have been generalized to principal torus bundles
in \cite{MR04a, MR04b, MR05}, however novel features arise.
First of all the $\TT^n$-action on the principal
torus bundle $Y$ need not always lift to an $\RR^n$-action on
$\alg$.  Even if
it does, this lift need not be unique.  Secondly, the crossed product $\alg \rtimes \RR^n$
need not be a continuous-trace algebra, but rather, it could be a continuous field of
noncommutative tori
\cite{MR04a}, and necessary and sufficient conditions are given for when these T-duals occur.
In recent work \cite{HM09, HM10}, we describe the T-duals of trivial torus bundles with
H-flux that has no component $H_0$ as below, in terms of parametrised strict deformation
quantization of principal torus bundles with basic H-flux.

More generally,
as argued in \cite{BHM3}, when the $\TT^n$-action on the principal
torus bundle $Y$ does not lift to an $\RR^n$-action on
$\alg$, one has to leave the category of $C^*$-algebras
in order to be able to define a ``twisted" lift. The associator in this case is the restriction
of the H-flux $H$ to the torus fibre of $Y$, and the ``twisted" crossed product is defined
to be the T-dual. The fibres of the T-dual are noncommutative,  nonassociative tori.
That this is a proper definition of T-duality is due to our results which show that
the analogs of Takai duality and the Connes-Thom isomorphism hold in this context.
Thus an appropriate context to describe nonassociative algebras that arise as T-duals
of spacetimes with background flux, such as nonassociative tori, is that of C$^*$-algebras
in tensor categories.

In the following, $\CT(X, H)$ denotes a continuous trace algebra over $X$ with Dixmier-Douady class $H\in H^3(X, \ZZ)$
cf. \cite{RW}.

\begin{theorem}\label{thm:Tduality} Let  $H_j \in H^j(X; H^{3-j}(T;\integer)) \subseteq H^3(X\times
T; \integer)$ for $j=0,1,2,3$. In the notation above,
$(X\times T, H_0+ H_1 + H_2 +H_3)$ and the parametrised strict
deformation quantization of $(Y, q^*(H_3))$ with deformation
parameter $\phi_1,\, [\phi_1]=H_1$, further deformed
nonassociatively with deformation 3-cocycle $\phi, \ [\phi] =
H_0$, are T-dual pairs, where the 1st Chern class $c_1(Y) =H_2$, that is, 
$$
(( \CT(Y, q^*(H_3))_{\phi_1})_\phi \cong \CT(X\times T, H_0 +H_1 + H_2 +
H_3)  \rtimes V,
  $$
  where $V$ is the universal cover of $T$, which acts on the algebra above cf. \cite{BHM3}.
\end{theorem}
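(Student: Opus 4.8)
The plan is to prove the stated isomorphism by decomposing the $H$-flux on $X\times T$ into its K\"unneth components and T-dualizing each component by the mechanism appropriate to it, then reassembling. First I would invoke the K\"unneth isomorphism $H^3(X\times T;\integer)\cong\bigoplus_{j=0}^{3}H^j(X;H^{3-j}(T;\integer))$ to write the flux as $H=H_0+H_1+H_2+H_3$, recording the role of each summand: $H_3\in H^3(X;\integer)$ is pulled back from the base; $H_2\in H^2(X;H^1(T;\integer))$ is the Chern-class component, which by hypothesis equals $c_1(Y)$; $H_1\in H^1(X;H^2(T;\integer))$ is a family over $X$ of $2$-cocycles on $\what T$, representing the deformation parameter $\phi_1$; and $H_0\in H^0(X;H^3(T;\integer))$ is a (locally constant) $3$-cocycle on $\what T$ representing the associator $\phi$, non-trivial only when $\dim T\ge 3$. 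On the right-hand side I would write down the Landstad--Kasprzak data (the dual $\what T$-action and the $V$-coaction) of $\CT(X\times T,H)\rtimes V$ relative to $C(X)$, so that the whole argument runs fibrewise over $X$; when $H_0\ne 0$ this crossed product is taken in the nonassociative sense of \cite{BHM3}.

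Next I would treat the four summands separately. The base flux $H_3$ is carried along unchanged: being pulled back from $X$ it is central for the fibre directions and commutes with the $V$-action, so it re-emerges on the dual side exactly as $q^*(H_3)$ --- concretely one twists the whole computation by $\CT(X,H_3)$ over $C(X)$. The Chern component $H_2$ is handled by the associative torus-bundle T-duality of Mathai--Rosenberg \cite{MR04a}: via a Green imprimitivity / Takai argument the crossed product of $\CT(X\times T,H_2)$ by $V$, acting through the translation action on $T=V/\Lambda$, is stably the continuous-trace algebra of the principal $T$-bundle $Y$ with $c_1(Y)=H_2$; this is where the topology of $Y$ is produced. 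The family $H_1$ yields, by the parametrised strict deformation quantization of \cite{HM09,HM10}, precisely the associative (noncommutative) deformation $(\,\cdot\,)_{\phi_1}$: the bicharacter-valued cocycle $\phi_1$ is inserted into the Kasprzak deformation of the dual action, producing the continuous field of noncommutative tori. Finally $H_0$ is exactly the obstruction to the $V$-action lifting to an honest automorphic action; following \cite{BHM3} one replaces the crossed product by the nonassociative twisted crossed product, which produces the further deformation $(\,\cdot\,)_\phi$ and lands in the tensor category with associator $\phi$ of Section~\ref{sect:natorus}.

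To glue the pieces I would invoke the nonassociative variant of Takai duality proved in Section~4 (the one that combines Takai duality with MacLane strictification): forming the appropriate dual crossed product of the iterated deformation $\bigl(\CT(Y,q^*(H_3))_{\phi_1}\bigr)_\phi$ recovers $\CT(X\times T,H)\otimes\cK$ together with its canonical $\what T$-action and $V$-coaction, whereupon the uniqueness half of Landstad's theorem, applied in the strictified category, identifies the pre-dual algebra as $\CT(X\times T,H)\rtimes V$. Since applying T-duality twice returns the original algebra --- again an instance of Takai duality for the $V$-action --- this both gives the asserted isomorphism and justifies calling the two sides T-dual.

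The step I expect to be the main obstacle is the disentangling: one must verify that the cross-terms of the K\"unneth decomposition contribute nothing, i.e.\ that the topological twist by $H_2$, the associative deformation by $\phi_1$, and the nonassociative deformation by $\phi$ genuinely commute up to coherent isomorphism and compose to a single (twisted) crossed product rather than interacting to produce spurious cocycle contributions. Tied to this is the need to track the associator $\phi$ through the Takai-duality step and to justify the MacLane strictification carefully, so that the dual object is an honest $C^*$-algebra and Landstad uniqueness applies; the purely associative pieces ($H_1$, $H_2$, $H_3$) go through as in \cite{MR04a,HM09,HM10} and present no new difficulty.
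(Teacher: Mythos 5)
Your K\"unneth decomposition and your treatment of the associative components $H_1,H_2,H_3$ agree in substance with the paper's first step, which simply cites \cite{HM09,HM10} for the isomorphism $\CT(Y,q^*(H_3))_{\phi_1}\cong \CT(X\times T, H_1+H_2+H_3)\rtimes V$. The problem is your gluing step. You propose to assemble the pieces by forming a dual crossed product of $(\CT(Y,q^*(H_3))_{\phi_1})_\phi$, recovering $\CT(X\times T,H)\otimes\cK$ with its Landstad data, and then invoking ``the uniqueness half of Landstad's theorem, applied in the strictified category.'' The paper explicitly warns (end of Section~\ref{sect:natorus}) that the Landstad--Kasprzak mechanism does not carry over to the nonassociative setting: a nonassociative $\alg^u$ would have a nonassociative crossed product $\bdd$, so one cannot have $\alg^u\rtimes G\cong\bdd\cong\alg\rtimes G$, and the authors state that a more sophisticated version of that route ``could work, but we leave that for future investigation.'' The identification you lean on for the reassembly is therefore precisely the step that is not available; as written, your proof has a genuine gap there.

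The paper's actual argument sidesteps all of this with a tensor factorization. Since $H_0\in H^0(X;H^3(T;\integer))$, one has $\CT(X\times T, H_0+H_1+H_2+H_3)\cong \CT(X\times T,H_1+H_2+H_3)\otimes_{C(T)}\CT(T,H_0)$, with $\CT(X\times T,H_0)\cong C_0(X)\otimes\CT(T,H_0)$; then $\CT(T,H_0)\rtimes V\cong A_\phi$ is the nonassociative torus by \cite{BHM3,BHM4}, and the final input is the identity $\CT(Y,q^*(H_3))_{\phi_1}\otimes A_\phi\cong(\CT(Y,q^*(H_3))_{\phi_1})_\phi$ from Section~\ref{sect:class}, i.e.\ the nonassociative deformation $(\cdot)_\phi$ is implemented by tensoring with $A_\phi$. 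This factorization is also what dissolves the ``cross-term'' worry you flag at the end: the $H_0$ component is split off as a tensor factor over $C(T)$ before any crossed product is taken, so it cannot interact with $H_1$, $H_2$, $H_3$, and no associator needs to be tracked through a Takai-duality step. To salvage your route you would have to prove a Landstad-type uniqueness theorem in the strictified tensor category, which the paper treats as an open problem harder than the theorem itself.
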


\begin{proof}
By \cite{HM09, HM10} we know that
$$
 \CT(Y, q^*(H_3))_{\phi_1}\cong \CT(X\times T, H_1 + H_2 + H_3) \rtimes V.
 $$
It follows from the basic theory of continuous trace $C^*$-algebras that, \cite{RW}, 
\begin{align*}
\CT(X\times T, H_0 +H_1 + H_2 + H_3) &\cong \CT(X\times T,
H_1 + H_2 + H_3) \otimes_{C_0(X \times T)} \CT(X \times T, H_0)\\
& \cong \CT(X\times T,
H_1 + H_2 + H_3) \otimes_{C(T)} \CT(T, H_0)
\end{align*} 
since $\CT(X \times T, H_0) \cong C_0(X)  \otimes \CT(T, H_0)$.
Also by \cite{BHM3,
BHM4}, we have $ \CT(T, H_0)  \rtimes V \cong
A_\phi $, where we recall that $ \CT(T, H_0)
\rtimes V$ is a twisted (nonassociative) crossed product and $A_\phi$ denotes the nonassociative torus. Also,
by \S\ref{sect:class}
$$
 \CT(Y, q^*(H_3))_{\phi_1} \otimes  A_\phi  \cong
(( \CT(Y, q^*(H_3))_{\phi_1})_\phi.
$$
Therefore
$$
(( \CT(Y, q^*(H_3))_{\phi_1})_\phi \cong \CT(X\times T, H_0 +H_1 + H_2 +
H_3)  \rtimes V,
  $$
 proving the result.
\end{proof}


\section{Nonassociative principal bundles}


In \cite{BHM3} the nonassociative bundles arose as duals of
associative algebras, but in this section they are the main
object of interest, so we shall take $G = \wh{V}$, where $V$ is
a vector group. The quotient of $V/\Lambda$ by a maximal rank
lattice $\Lambda$ is a torus $T$, whose Pontrjagin dual $\wh{T}
\cong \Lambda^\perp \subset \wh{V}$ is the reciprocal lattice
in the dual group which is trivial on $\Lambda$.

We recall that an algebra $\alg(X)$ over a locally compact
Hausdorff space $X$ is an algebra with a homomorphism from
$C_0(X)$ to the centre of the multiplier algebra ${\mathcal
M}\alg(X)$, and, for consistency in the nonassociative case, we
require this action to commute with that of $C_0(T)$.

This suggests the following definition:

\begin{definition} A nonassociative algebra $\alg(X)$ over $X$,
with nonassociativity defined by $\phi$,  is called a {\it
nonassociative principal $(T,\phi)$ bundle} (or {\it
$NAP(T,\phi)$-bundle}), if there is an action $\gamma$ of $T$
as automorphisms of $\alg(X)$ (commuting with the
$C_0(X)$-action) and an isomorphism of nonassociative algebras
$$
\alg(X)\rtimes_\gamma T \cong C_0(X,\cpt_\phi),
$$
for twisted compact operators on some space.
\end{definition}

\bigskip
\begin{example} This is clearly modelled on the definition of a
noncommutative principal bundle in \cite{ENOO}, to which it
reduces in the associative case when $\phi \equiv 1$. As a
concrete example we may consider the nonassociative torus
algebra generated by unitary elements $U(\xi)$ for $\xi \in
\wh{T}$ satisfying
$$
U(\xi)(U(\eta)U(\zeta)) = \phi(\xi,\eta,\zeta)(U(\xi)U(\eta))U(\zeta)
$$
for all $\xi, \eta, \zeta \in \wh{T}$. There is an obvious
action of $v\in T$ action given by $v:U(\xi) \mapsto
\xi(v)U(\xi)$ which preserves the above defining relation.
\end{example}


The definition is also motivated in part by the following
observation. Recalling that $\wh{T} \cong \Lambda^\perp \subset
\wh{V}$, \cite[Theorem 8.3]{BHM3} tells us that, in the case
discussed there, the nonassociative torus bundle is isomorphic
to
$$
C_0(X,\cpt_\phi(L^2(\wh{V}))\rtimes_{\gamma,u}\Lambda^\perp
 = C_0(X,\cpt_\phi(L^2(\wh{V}))\rtimes_{\gamma,u}\wh{T},
$$
where the crossed product is a Leptin--Busby-Smith crossed
product (\cite[Section 3]{BHM3}) defined by an action $\gamma$
and an algebra valued cocycle $u$ satisfying
\begin{eqnarray*}
\gamma_x\circ\gamma_y &=& \ad(u(x,y))\circ\gamma_{xy},\\ \phi(x,y,z)
u(x,y)u(xy,z) &=& \gamma_x[u(y,z)]u(x,yz).
\end{eqnarray*}
 On the other hand, such a crossed product has a dual action
 $\wh{\gamma}$ of $T$ and if we knew a suitable version of Takai duality \cite[Theorem
 9.2]{BHM3} for nonassociative algebras we should have
$$
C_0(X,\cpt_\phi(L^2(\wh{V}))\rtimes_{\gamma,u}\wh{T}\rtimes_{\wh{\gamma}}T
\cong
C_0(X,\cpt_\phi(L^2(\wh{V}))\otimes \cpt_{\overline{\phi}}(L^2(\wh{V}),
$$
showing that the definition does match what happens in the
known nonassociative case, apart, possibly, from the
nonassociativity.

To investigate this last point we shall now  show that the
multiplier $u$ does not compound the existing nonassociativity.

\bigskip
\begin{proposition} The action $\gamma$ of $\wh{T} = \Lambda^\perp\subset \wh{V}$
on $C_0(X,\cpt_\phi(L^2(\wh{V})))$ defined (with the $X$ argument suppressed)
by $\gamma_\xi[K](\eta,\zeta) =
\phi(\xi,\eta,\zeta)^{-1}K(\xi^{-1}\eta,\xi^{-1}\zeta)$ has a trivial
associativity cocycle.
\end{proposition}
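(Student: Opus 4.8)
The statement concerns the action $\gamma$ of $\Lambda^\perp = \wh{T}$ on the twisted compacts $C_0(X, \cpt_\phi(L^2(\wh V)))$. Because the crossed product uses an algebra-valued cocycle $u$ and an action $\gamma$ satisfying $\gamma_x\circ\gamma_y = \ad(u(x,y))\circ\gamma_{xy}$ together with the twisted cocycle identity $\phi(x,y,z)\,u(x,y)u(xy,z) = \gamma_x[u(y,z)]u(x,yz)$, the potential source of nonassociativity beyond the ambient $\phi$ is the associativity cocycle built from $u$ together with the $\phi$-nonassociativity of the underlying algebra $\cpt_\phi$. So the goal is to verify that, with the specific formula $\gamma_\xi[K](\eta,\zeta) = \phi(\xi,\eta,\zeta)^{-1}K(\xi^{-1}\eta,\xi^{-1}\zeta)$, the combination that measures the failure of associativity collapses to the trivial $\TT$-valued cocycle on $\Lambda^\perp$.

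**Key steps.** First I would write down explicitly the twisted product on $\cpt_\phi(L^2(\wh V))$, namely $(K_1 \star K_2)(\eta,\zeta) = \int \phi(\eta, w, \zeta) K_1(\eta, w) K_2(w,\zeta)\,dw$ as in the introduction, and confirm that the given $\gamma_\xi$ is an algebra automorphism for this product: substituting the formula, the cocycle factors arrange via the $3$-cocycle identity for $\phi$ so that $\gamma_\xi[K_1 \star K_2] = \gamma_\xi[K_1] \star \gamma_\xi[K_2]$. Second, I would compute $\gamma_\xi \circ \gamma_\eta$ and compare it with $\gamma_{\xi\eta}$; the discrepancy should be implemented by inner conjugation by an explicit kernel $u(\xi,\eta)$, which one reads off as a phase-valued multiplication operator coming from $\phi(\xi,\eta,\cdot)$ — this is exactly the algebra-valued cocycle $u$ of \cite{BHM3}. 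Third — the heart of the matter — I would form the associativity cocycle of the action, i.e. the obstruction to $(\gamma_\xi\gamma_\eta)\gamma_\zeta$ and $\gamma_\xi(\gamma_\eta\gamma_\zeta)$ agreeing after accounting for the inner corrections by $u$, and show that the various factors of $\phi$ (from the three applications of $\gamma$, from the $u$'s, and from the $\phi$-nonassociativity of $\cpt_\phi$ itself) cancel in pairs, leaving the constant $1$. Concretely this is the identity $\phi(\xi,\eta,\zeta)\,u(\xi,\eta)u(\xi\eta,\zeta) = \gamma_\xi[u(\eta,\zeta)]\,u(\xi,\eta\zeta)$ being satisfied with the associativity cocycle equal to $1$ rather than some nontrivial class.

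**Main obstacle.** The delicate point is bookkeeping: there are several independent sources of $\phi$-phases — the explicit $\phi(\xi,\eta,\zeta)^{-1}$ in the definition of $\gamma$, the $\phi$ hidden in $u(\xi,\eta)$ as a multiplication operator, and the $\phi$ in the twisted convolution defining $\cpt_\phi$ — and one must track all their arguments carefully under the substitutions $\eta \mapsto \xi^{-1}\eta$, etc., using the $3$-cocycle identity $\phi(\xi,\eta,\zeta)\phi(\xi,\eta\zeta,\omega)\phi(\eta,\zeta,\omega) = \phi(\xi\eta,\zeta,\omega)\phi(\xi,\eta,\zeta\omega)$ (in multiplicative form) at each cancellation. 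Once the arguments are lined up, the cancellation is forced by the cocycle identity; but getting the arguments aligned — especially keeping straight whether a given $\phi$ is evaluated at group elements or at the spectral variables of $L^2(\wh V)$ — is where all the work lies. I expect that after this alignment the associativity cocycle is literally the constant function $1$, which is the assertion. As a sanity check, one can specialize to $\phi \equiv 1$ and recover the classical (associative, untwisted) statement, and also compare with \cite[Theorem 8.3]{BHM3} to make sure the $u$ appearing here is the same one.
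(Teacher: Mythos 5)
Your overall route is the paper's: identify the multiplier $u(\xi,\eta)$ implementing $\gamma_\xi\circ\gamma_\eta = \ad(u(\xi,\eta))\circ\gamma_{\xi\eta}$ as the multiplication operator $\psi(\cdot)\mapsto\phi(\xi,\eta,\cdot)\psi(\cdot)$, and then evaluate the associativity cocycle $u(\xi,\eta)u(\xi\eta,\zeta)u(\xi,\eta\zeta)^{-1}\gamma_\xi[u(\eta,\zeta)]^{-1}$. But the mechanism you propose for the final step is not right. The $\phi$-factors do \emph{not} cancel in pairs down to the constant $1$ by the $3$-cocycle identity alone: carrying out the computation (using multiplicativity and antisymmetry of the tricharacter, not just the cocycle identity) leaves the residual scalar $\phi(\eta,\zeta,\xi)$, i.e.\ the value of $\phi$ at the three \emph{group} elements rather than at the spectral variable of $L^2(\wh{V})$. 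As a function on $\wh{V}^3$ this is certainly not identically $1$, so ``the cancellation is forced by the cocycle identity'' is false as stated.

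The missing ingredient is that $\xi,\eta,\zeta$ range over $\Lambda^\perp=\wh{T}$, and the restriction of $\phi$ to $\Lambda^\perp\times\Lambda^\perp\times\Lambda^\perp$ is trivial --- the ``rigidity'' condition recorded at the end of Section~1 (in the motivating examples it comes from integrality of the tricharacter on the reciprocal lattice). Without invoking this, your argument stalls at the residual factor $\phi(\eta,\zeta,\xi)$; indeed the same formula applied to the action of all of $\wh{V}$ has a genuinely nontrivial associativity cocycle. Your sanity check $\phi\equiv 1$ cannot detect the omission, since it kills the residual factor along with everything else. A secondary point: you cite only the $3$-cocycle identity as the tool for aligning arguments, but the simplification of $\phi(\omega\xi,\eta,\zeta)\phi(\omega,\eta,\zeta)^{-1}\phi(\xi,\omega^{-1}\eta,\omega^{-1}\zeta)^{-1}$ into the form $\phi(\omega,\xi,\eta)\phi(\omega,\xi,\zeta)^{-1}$ --- which is what exhibits the discrepancy between $\gamma_\omega\gamma_\xi$ and $\gamma_{\omega\xi}$ as an inner automorphism --- really uses that $\phi$ is an antisymmetric tricharacter, multiplicative in each slot; the bare cocycle identity is not enough bookkeeping.
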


\bigskip\noindent{\it Proof.} We calculate that
\begin{eqnarray*}
(\gamma_\omega\gamma_\xi[K])(\eta,\zeta) &=&
\phi(\omega,\eta,\zeta)^{-1}\phi(\xi,\omega^{-1}\eta,\omega^{-1}\zeta)^{-1}
K(\xi^{-1}\omega^{-1}\eta,\xi^{-1}\omega^{-1}\zeta)\\ &=&
\frac{\phi(\omega\xi,\eta,\zeta)}{\phi(\omega,\eta,\zeta)\phi(\xi,\omega^{-1}\eta,\omega^{-1}\zeta)}
\gamma_{\omega\xi}[K](\eta,\zeta)\\ &=&
\phi(\xi,\omega,\zeta)\phi(\xi,\eta,\omega)\gamma_{\omega\xi}[K](\eta,\zeta)\\
&=&
\phi(\omega,\xi,\eta)\phi(\omega,\xi,\zeta)^{-1}\gamma_{\omega\xi}[K](\eta,\zeta)\\
&=& \ad(u(\omega,\xi))\gamma_{\omega\xi}[K](\eta,\zeta),
\end{eqnarray*}
where $u(\omega,\xi)$ is the multiplication operator
$\psi(\eta) \mapsto \phi(\omega,\xi,\eta)\psi(\eta)$ on
$L^2(\wh{V})$. Now the associativity cocycle is given by
\begin{eqnarray*}
u(\xi,\eta)u(\xi\eta,\zeta)u(\xi,\eta\zeta)^{-1}\gamma_\xi[u(\eta,\zeta)]^{-1}
&=& \phi(\xi,\eta,\cdot)\phi(\xi\eta,\zeta,\cdot)
\phi(\xi,\eta\zeta\cdot)^{-1}\gamma_\xi[\phi(\eta,\zeta,\cdot)]^{-1}\\ &=&
\phi(\xi,\eta,\cdot)\phi(\xi\eta,\zeta,\cdot)
\phi(\xi,\eta\zeta\cdot)^{-1}\phi(\eta,\zeta,\xi^{-1}\cdot)^{-1}\\ &=&
\phi(\eta,\zeta,\xi),
\end{eqnarray*}
and for $\xi$, $\eta$, $\zeta \in \Lambda^\perp$, we have $\phi(\eta,\zeta,\xi)
= 1$. \quod

This result shows that in \cite[Theorems 8.2, 8.3]{BHM3} the twisted crossed
product by $\Lambda^\perp \cong \wh{T}$ does not introduce any extra
nonassociativity into
$$
\alg(X) \cong C_0(X,\cpt_\phi(L^2(\wh{V})) \rtimes_{\gamma,u}\Lambda^\perp.
$$


\section{Strictification and duality}\label{sect:strictify}


In working directly from our definition our first task is to
get a reverse Takai duality in which we start with $\alpha \sim
\wh{\gamma}$ and then do a twisted product so that we can get
$$
\alg(X) \cong C_0(X,\cpt_\phi(L^2(\wh{V}))\rtimes_{\gamma,u}\wh{T}.
$$
We shall approach this by proving a new, particularly
interesting duality Theorem which arises from combining the
duality Theorem \cite[Theorem 9.2]{BHM3} with the
strictification procedure in \cite[Appendix A]{BHM4}.
In this section, we will take $G$ to be an abelian Lie group.

Before stating the Theorem we recall that, according to
MacLane's strictification Theorem, any monoidal category is
equivalent to a strict monoidal category, in which the
associativity map $A\otimes (B\otimes C) \to (A\otimes
B)\otimes C$ is the obvious identification by rebracketing:
$a\otimes (b\otimes c) \mapsto (a\otimes b)\otimes c$. In
\cite{BHM4} it was shown that, when the objects are
$C_0(G)$-modules (or $C^*(\wh{G})$-modules) and the associator
map is given by the action of $\phi \in {\mathcal
UM}(C_0(G)\times C_0(G)\times C_0(G))$, the equivalence is
particularly simple, and is given by a functor taking an object
$A$ to $A\otimes C_0(G)$. This could be identified with the
crossed product $A \rtimes \wh{G}$ and it is useful to give its
Fourier transformed version. In the following theorem , let $\alg$ be 
such an algebra in a monoidal category as in \cite{BHM4},
with associator $\phi$ as above.

\begin{theorem} Let $\alg$ be an algebra in a monoidal category as 
above, with associator $\phi$.
The multiplication for the crossed product $\alg\rtimes\wh{G}$
can be written in terms of $\wh{a}(\xi) = \int a(x)\xi(x)\,dx$
as
$$
(a\star b)(x) = a_x[b(x)].
$$
where the action of $a_x \in \alg\otimes C_0(G)$ is a
combination of the multiplication in $\alg$ and the action of
$C_0(G)$.
\end{theorem}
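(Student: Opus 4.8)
The plan is to obtain the formula by transporting the multiplication supplied by the strictification of \cite[Appendix A]{BHM4} and then Fourier transforming in the $\wh{G}$ variable; most of the work is bookkeeping, so I will only indicate the shape. First I would recall the strictification itself: for the monoidal category whose objects are $C_0(G)$-modules (equivalently $C^*(\wh{G})$-modules) and whose associator is the action of $\phi$ as above, the strictifying functor carries an algebra $\alg$ in this category to the strictly associative algebra $\alg\otimes C_0(G)$, and, via the Fourier isomorphism $C_0(G)\cong C^*(\wh{G})$, this algebra is identified with the crossed product $\alg\rtimes\wh{G}$. Hence the multiplication to be described is the one transported from the monoidal multiplication of $\alg$ onto $\alg\otimes C_0(G)$: on the $\wh{G}$-side it is the twisted convolution carried by $\alg\rtimes\wh{G}$, assembled from the multiplication $m$ of $\alg$, the $C_0(G)$-module (equivalently $C^*(\wh{G})$-coaction) structure, and an algebra-valued cocycle whose associativity defect is $\phi$ --- this cocycle being exactly what cancels the associator and renders the transported product strictly associative.

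Next I would pass to the $G$-picture. An element of $\alg\rtimes\wh{G}$ is encoded by an $\alg$-valued function $a$ on $G$ playing the role of $\wh{a}(\xi)=\int_G a(x)\,\xi(x)\,dx$, and the product of $a$ and $b$ is the function $a\star b$ on $G$ determined by $\wh{a\star b}=\wh a\ast\wh b$. Writing the twisted convolution as $(\wh a\ast\wh b)(\xi)=\int_{\wh G}\wh a(\eta)\,\sigma_\eta\bigl[\wh b(\eta^{-1}\xi)\bigr]\,d\eta$ decorated by the $\phi$-cocycle, changing the variable of integration to $\zeta=\eta^{-1}\xi$, using $\xi(x)=\eta(x)\zeta(x)$ and that the $\wh{G}$-action $\sigma_\eta$ on $\alg$ commutes with integration, the integral over $\wh{G}$ collapses to a pointwise product over $G$; the character factors $\eta(x)$, the action $\sigma_\eta$ and the $\phi$-cocycle then reassemble, at each fixed $x\in G$, into a single operator on $\alg$ applied to $b(x)$. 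Reading this off gives $(a\star b)(x)=a_x[b(x)]$, where $a_x$ is the element of $\alg\otimes C_0(G)$ whose $\alg$-entry is $a(x)$ and whose $C_0(G)$-entry is a partial evaluation of $\phi$ at $x$ (again a unitary multiplier of $C_0(G)$), acting on $\alg$ by first applying this $C_0(G)$-action and then left-multiplying by $a(x)$ in $\alg$ --- i.e.\ ``a combination of the multiplication in $\alg$ and the action of $C_0(G)$'' as asserted.

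Finally I would check that $a_x$ is a well-defined bounded operator on $\alg$ depending continuously on $x$, that the product $\star$ so obtained is associative (no $\phi$ surviving, consistent with the strictification), and that for $\phi\equiv 1$ it specialises to the usual position-space description of $\alg\rtimes\wh{G}\cong\alg\otimes C_0(G)$. I expect the main obstacle to be the collapse-and-reassembly step: verifying that after the Fourier transform the product is genuinely pointwise in $x\in G$, with no residual convolution, and that the leftover twist localises precisely into an element $a_x\in\alg\otimes C_0(G)$ rather than a more complicated operator. This rests on $\phi$ being a multiplier of $C_0(G)\times C_0(G)\times C_0(G)$ --- hence a function of three points of $G$, so that a partial evaluation at $x$ is again a multiplier of $C_0(G)$ --- together with the Landstad compatibility $\wh\alpha(\xi)(\lambda(g))=\xi(g)\lambda(g)$ between the $C_0(G)$-coaction and the $\wh{G}$-action, which is what permits the $\eta$-integration to be carried out leaving only the operator $a_x$.
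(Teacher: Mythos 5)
Your overall shape --- Fourier transform the crossed-product convolution and collapse the $\wh{G}$-integral into a pointwise formula in $x\in G$ --- is the same as the paper's, but the concrete content of the theorem is the identification of $a_x$, and there your proposal goes wrong. In the paper, $\alg\rtimes\wh{G}$ is the \emph{ordinary} crossed product by the honest dual action $\alpha$ of $\wh{G}$: the convolution $\wh{(a\star b)}(\xi)=\int\wh{a}(\eta)\alpha_\eta[\wh{b}(\xi\eta^{-1})]\,d\eta$ carries no $\phi$-cocycle. Substituting $\wh{a}(\eta)=\int a(y)\eta(y)\,dy$ and changing variables $y=z+x$ in $G$ (so that $\eta(y)\,(\xi\eta^{-1})(x)=\eta(z)\,\xi(x)$) gives
$$
\wh{(a\star b)}(\xi)=\int a_x(z)\,\eta(z)\,\alpha_\eta[b(x)]\,\xi(x)\,dx\,dz\,d\eta,\qquad a_x(z):=a(z+x),
$$
and the inner $(z,\eta)$-integral is recognised, via the identity $(f.b)(x)=\int f(z)\eta(z)\alpha_\eta[b(x)]\,dz\,d\eta$ linking the $C_0(G)$-action to the $\wh{G}$-action, as the action of $a_x\in\alg\otimes C_0(G)$ on $b(x)$. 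So $a_x$ is the \emph{translate} $z\mapsto a(z+x)$, a genuine $\alg$-valued function on $G$ rather than a simple tensor, and no $\phi$ enters it. Your $a_x$ (``$\alg$-entry $a(x)$, $C_0(G)$-entry a partial evaluation of $\phi$ at $x$'') is not the right operator, and the ``decoration by the $\phi$-cocycle'' you insert into the convolution does not belong to this theorem: the cocycle and the $\phi^\prime\otimes\phi^{\prime\prime}\otimes\phi^{\prime\prime\prime}$ decomposition enter only in the \emph{modified} crossed product $\alg\rtimes^\phi\wh{G}$ of Definition 4.2 and Theorem 4.3.

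Two of your proposed sanity checks would also fail. The product $(a\star b)(x)=a_x[b(x)]$ on the ordinary crossed product is \emph{not} associative when $\alg$ is $\phi$-nonassociative --- Theorem 4.3 gives $\alg\rtimes_{\wh{\beta}}\wh{G}\cong\bdd\otimes\cpt_{\overline{\phi}}(L^2(G))$, which is still twisted; associativity is restored only by the subsequent modification $\rtimes^\phi$. And the mechanism that makes the $\eta$-integration collapse is not the Landstad compatibility you cite (that belongs to the Landstad--Kasprzak digression in Section 1) but simply the change of variables in $G$ together with the stated link between the $C_0(G)$-action and the $\wh{G}$-action. Since the step you yourself flag as the main obstacle is exactly where your identification of $a_x$ breaks down, the proposal as written does not establish the theorem.
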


\begin{proof} We shall write the composition in the abelian group $G$ additively, but that in $\widehat{G}$ multiplicatively.
\begin{eqnarray*}
\wh{(a\star b})(\xi) &=& \int
\wh{a}(\eta)\alpha_\eta[\wh{b}(\xi\eta^{-1})]\,d\eta\\ &=&\int
a(y)\eta(y)\alpha_\eta[b(x)](\xi\eta^{-1})(x)\,dxdyd\eta\\ &=&\int
a(z+x)\eta(z)\alpha_\eta[b(x)]\xi(x)\,dxdzd\eta\\ &=&\int
a_x(z)\eta(z)\alpha_\eta[b(x)]\xi(x)\,dxdzd\eta
\end{eqnarray*}
where $a_x(z) = a(z+x)$. Now
$$
 (f.b)(x) = \int f(z)\eta(z)\alpha_\eta[b(x)]\,dz d\eta
$$
links the action of $f\in C_0(G)$ with that of $\wh{G}$, so
$$
\wh{(a\star b})(\xi) =\int a_x(z)\eta(z)\alpha_\eta[b(x)]\xi(x)\,dxdzd\eta
=\int a_x[b(x)]\xi(x)\,dx,
$$
showing that
$$
\hfill(a\star b)(x) = a_x[b(x)]. \hspace{1in}
$$
\end{proof}

This can be identified with the crossed product $A \rtimes
\wh{G}$, but we wish to modify the tensor product
$\otimes_{C_0(G)}$ to $\circ$ so that
$$
(A\circ B)\otimes_{C_0(G)} C \equiv A\otimes_{C_0(G)}
(B\otimes_{C_0(G)} C).
$$
(An explicit construction shows that this is possible: we
decompose $\phi^{-1} \in {\mathcal UM}(C_0(G))\otimes {\mathcal
UM}(C_0(G))\otimes {\mathcal UM}(C_0(G))$ as
$\phi^\prime\otimes\phi^{\prime\prime}\otimes
\phi^{\prime\prime\prime}$ and then act by $a\otimes b \mapsto
[(\phi^\prime.a)\otimes(\phi^{\prime\prime}.b)].\phi^{\prime\prime\prime}$.)
This enables us to get a functor sending $\alg\circ\alg \to
\alg\otimes \alg$.

\begin{definition} Changing the multiplication $\alg\otimes\alg \to \alg$
 an algebra $\alg$ to the multiplication
$\alg\circ\alg \to \alg\otimes \alg \to \alg$ yields the
modified crossed product $\alg \rtimes^\phi \wh{G}$.
\end{definition}

\bigskip
(We note that this crossed product is defined for any
antisymmetric tricharacter $\psi$, and not just for $\phi$. In
general it changes the nonassociativity from that defined for
one tricharacter to that defined by another.)

\begin{theorem} Let $\bdd$ be a $C^*$-algebra and $\alg = \bdd\rtimes_{\beta,u}G$ be a
nonassociative Leptin--Busby--Smith generalised crossed
product, with associator defined by $\phi$. Then for the dual
action $\wh{\beta}$ of $\wh{G}$, we have
$$
\alg\rtimes^{\psi\phi}_{\wh{\beta}} \wh{G} \cong \bdd \otimes
\cpt_{\psi}(L^2(G)),
$$
and, in particular,
$$
\alg\rtimes^{\phi}_{\wh{\beta}} \wh{G} \cong \bdd \otimes \cpt(L^2(G)),
$$
and
$$
\alg\rtimes_{\wh{\beta}} \wh{G} \cong \bdd \otimes \cpt_{\overline{\phi}}(L^2(G)).
$$
An ordinary crossed product $\alg\rtimes_\alpha G$ has a
natural $\wh{G}$ action $\wh{\alpha}$ and $(\alg\rtimes_\alpha
G)\rtimes_{\wh{\alpha}}\wh{G}$ with the double dual action
$\wh{\wh{\alpha}}$ of $\wh{\wh{G}} \cong G$, is isomorphic to
$\alg\otimes\cpt(L^2(G))$ with the action $\alpha$ on the first
factor and the adjoint action of the right regular
representation of $G$ on the compact operators. This is true
for associative and nonassociative algebras.
\end{theorem}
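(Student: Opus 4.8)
The plan is to derive all four isomorphisms from the nonassociative duality theorem \cite[Theorem 9.2]{BHM3} by composing it with the strictification functor of \cite[Appendix A]{BHM4}. The base case is \cite[Theorem 9.2]{BHM3} itself: for the Leptin--Busby--Smith crossed product $\alg = \bdd\rtimes_{\beta,u}G$, the \emph{ordinary} dual crossed product by $\wh\beta$ is $\bdd\otimes\cpt_{\overline\phi}(L^2(G))$, with $\bdd$ acting on the first factor and $\overline\phi$ entering as the associator carried by the compact-operator factor. This is exactly the last displayed isomorphism of the statement, so it remains only to see how the modified crossed products $\rtimes^\chi$ change the answer, and then to treat the untwisted double dual.

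For the first three displays I would run $\rtimes^\chi$ with $\chi = \psi\phi$, $\chi = \phi$ and $\chi$ trivial. Recall that $\alg\rtimes^\chi\wh G$ carries the multiplication of $\alg\rtimes\wh G$ precomposed with the re-bracketing map $a\otimes b\mapsto[(\chi'.a)\otimes(\chi''.b)].\chi'''$ built from a decomposition $\chi^{-1} = \chi'\otimes\chi''\otimes\chi'''$ (independent of the decomposition, as noted above). The key step is the computation that, under the identification $\alg\rtimes_{\wh\beta}\wh G\cong\bdd\otimes\cpt_{\overline\phi}(L^2(G))$, this twist acts through the $C_0(G)$-module structure, which on the right-hand side is carried by the $L^2(G)$ of the compact operators, and turns the deformed kernel product $(K_1\star_{\overline\phi}K_2)(x,z) = \int\overline\phi(x,y,z)K_1(x,y)K_2(y,z)\,dy$ into $\star_{\overline\phi\cdot\chi}$; equivalently $\alg\rtimes^\chi_{\wh\beta}\wh G\cong\bdd\otimes\cpt_{\overline\phi\cdot\chi}(L^2(G))$. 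Then $\chi = \psi\phi$ gives $\bdd\otimes\cpt_\psi(L^2(G))$, $\chi = \phi$ gives $\bdd\otimes\cpt(L^2(G))$ (now honestly associative), and $\chi$ trivial recovers the base case. The main obstacle is precisely this bookkeeping: one needs the explicit form of \cite[Theorem 9.2]{BHM3}, in particular the slots in which $\overline\phi$ sits, and must check that the diagonal multiplication operators introduced by the twist compose to contribute exactly the factor $\chi(x,y,z)$ in the kernel product, with no residual coboundary and independently of the chosen decomposition.

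For the final paragraph $\alg$ is arbitrary, possibly nonassociative, and both $\rtimes_\alpha G$ and $\rtimes_{\wh\alpha}\wh G$ are ordinary crossed products, so no twisting is involved. Here I would simply invoke the classical Takai duality isomorphism $(\alg\rtimes_\alpha G)\rtimes_{\wh\alpha}\wh G\cong\alg\otimes\cpt(L^2(G))$ — implemented by the canonical unitary on $L^2(G\times G)$ together with Fourier transform on $\wh G$ — and observe that this map is assembled entirely from the $G$- and $\wh G$-variables and the $\alpha$-action on $\alg$. The associator of $\alg$ acts only by permuting $\alg$-tensor-factors and commutes with every map in the construction, so it is carried along inertly; the same formula therefore defines an isomorphism of nonassociative algebras intertwining the identical associators on the two sides, which is the assertion for associative and nonassociative $\alg$ alike. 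Alternatively one can view this as the $u\equiv 1$ specialisation of the first part, with the intrinsic nonassociativity of $\alg$ left untouched.
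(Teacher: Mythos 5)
Your strategy is sound and, for the special cases, coincides with the paper's: the paper also obtains $\alg\rtimes^{\phi}_{\wh{\beta}}\wh{G}\cong\bdd\otimes\cpt(L^2(G))$ by combining \cite[Theorem 9.2]{BHM3} with the strictification of \cite[Example A.1]{BHM4}. Where you genuinely diverge is on the general twist: you take the ordinary dual crossed product $\alg\rtimes_{\wh{\beta}}\wh{G}\cong\bdd\otimes\cpt_{\overline{\phi}}(L^2(G))$ as the base case and propose to show that the modification $\rtimes^{\chi}$ shifts the kernel twist from $\overline{\phi}$ to $\overline{\phi}\cdot\chi$, whereas the paper proves the general statement directly: it writes elements of the modified crossed product as functions on $G\times G$, inserts the decomposition $\phi'\otimes\phi''\otimes\phi'''$ into the product $(a\star b)(x)=a_x[b(x)]$ of Theorem 4.1, and applies the explicit transform $a(t,x)=\beta_{t+x}[\wt{a}(t+x,x)]u(t,x)^{-1}$ to exhibit the $\psi$-twisted multiplication of kernels; the two special cases then drop out by putting $\psi\equiv 1$ and $\psi=\phi^{-1}$. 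Your direction of bootstrap is conceptually cleaner, but the step you defer as bookkeeping --- that the diagonal operators $\chi',\chi'',\chi'''$ contribute exactly the factor $\chi$ to the kernel product \emph{after} transporting the $C_0(G)$-module structure through the isomorphism of \cite[Theorem 9.2]{BHM3} --- is not routine: the cocycle $u$, the relation $\phi(x,y,z)u(x,y)u(xy,z)=\gamma_x[u(y,z)]u(x,yz)$, and the antisymmetry of the tricharacter all enter, and verifying it amounts to redoing the paper's kernel computation. So your reduction is not wrong, but as written it relocates rather than removes the substantive work. For the final paragraph your reading matches the paper's (the isomorphism is built only from the $G$- and $\wh{G}$-variables, so the associator of $\alg$ is carried along inertly, and it is the $u\equiv 1$, $\psi\equiv 1$ specialisation); you should, however, also record the explicit identification $\wt{\wh{\wh{\alpha}}_v[F]}(x,z)=\alpha_v[\wt{F}(xv,zv)]$ of the double dual action as $\alpha$ combined with the adjoint of the right regular representation, since that identification is part of the assertion and the paper supplies it by direct computation.
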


\bigskip\noindent{\it Proof.} We shall just give the
algebraic form of the isomorphism. The technical details
justifying the formal calculations involve showing that
subspaces of compactly supported continuous functions in the
domain and range of the isomorphism are dense, and those
details are the same as in the usual case. The associativity of
$\alg$ is not relevant to the argument, which can even be
generalised to the context of monoidal categories.

The first special case can be proved by noting that
\cite[Theorem 9.2]{BHM3} gives an isomorphism
$$
\alg\times^\phi_{\wh{\beta}} \wh{G} \cong \bdd \otimes \cpt_{\overline{\phi}}(L^2(G)),
$$
and by \cite[Example A.1]{BHM4} the strictification sends
$\cpt_{\overline{\phi}}(L^2(G))$ to $\cpt(L^2(G))$, so that
the result follows.

In the general case we give the direct construction. We first
note that for $\alg = \bdd\rtimes G$ the elements of $\alg$ are
functions on $G$, with $C_0(G)$ acting by multiplication, and
elements of the crossed product can be regarded as functions on
$G\times G$, with multiplication
\begin{eqnarray*}
(a\star b)(s,x) &=&
(\phi^\prime.a)_x[(\phi^{\prime\prime}.b)(x)].\phi^{\prime\prime\prime}(x)\\
&=& \int
\phi^\prime(t)a_x(t,\cdot)\beta_t[\phi^{\prime\prime}b(s-t,x)].\phi^{\prime\prime\prime}(x)u(t,s-t)\,dt\\
&=& \int \phi(t,s-t,x)a_x(t,s-t)\beta_t[b(s-t,x)]u(t,s-t)\,dt.
\end{eqnarray*}
We now introduce the transform
$$
a(t,x) = \beta_{t+x}[\wt{a}(t+x,x)]u(t,x)^{-1}
$$
with inverse $\wt{a}(w,z) = \beta_w^{-1}[a(w-z,z)u(w-z,z)]$.
Then, recalling the relation between $u$ and $\phi$ and using
the antisymmetry properties of $\psi$,
\begin{eqnarray*}
\beta_w[(\wt{a\star b})(w,z)] &=& (a\star
b)(w-z,z)u(w-z,z)\\
 &=& \int (\psi\phi)(t,w-z-t,z)a_z(t,w-z-t)\\&\qquad&\qquad\beta_t[b(w-z-t,z)]u(t,w-z-t)u(w-z,z)\,dt\\
 &=& \int \psi(t,w-z-t,z)a(t,w-t)\beta_t[b(w-z-t,z)u(w-z-t,z)]u(t,w)\,dt\\
 &=& \int \psi(t,w-t,z)\beta_w[\wt{a}(w,w-t)]u(t,w)^{-1}\beta_t[\beta_{w-t}[\wh{b}(w-z-t,z)]]u(t,w)\,dt\\
 &=& \int \psi(t,w,z)\beta_w[\wt{a}(w,w-t)]\beta_w[b(w-t,z)]\,dt\\
 &=& \int \psi(t,w,z)\beta_w[\wt{a}(w,v)]\beta_w[b(v,z)]\,dv
\end{eqnarray*}
showing that we have $\psi$-twisted multiplication of compact
kernels. When $\psi \equiv 1$ we get the normal untwisted
multiplication on the right, and when $\psi = \phi^{-1}$ we get
an ordinary crossed product on the left. \quod

This result does not depend on associativity of the original
algebra, and enables us to confirm that the nonassociative
bundles of \cite{BHM3} are $NAP(T,\phi)$-bundles.

\begin{corollary}
For $\alg =
C_0(X,\cpt_\phi(L^2(\wh{V})))\rtimes_{\gamma,u}\wh{T}$ there is
an isomorphism $$ \alg\rtimes_{\wh{\gamma}} T \cong
C_0(X,\cpt_\phi(L^2(\wh{V})))\otimes \cpt(L^2(\wh{T}))).
$$
\end{corollary}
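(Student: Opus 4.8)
The plan is to obtain the Corollary as an immediate instance of the preceding Theorem. I would apply that Theorem with $\bdd = C_0(X,\cpt_\phi(L^2(\wh{V})))$, with $G = \wh{T} \cong \Lambda^\perp \subset \wh{V}$, with $\beta = \gamma$ the action recalled just before the Proposition, and with $u$ the same algebra-valued Busby--Smith cocycle; then $\alg = \bdd\rtimes_{\gamma,u}\wh{T}$ is exactly a nonassociative Leptin--Busby--Smith generalised crossed product of the type covered by the Theorem. As stressed in the proof of that Theorem, the argument does not use associativity of $\bdd$: it takes place inside the monoidal category whose associator is the action of $\phi$ on the $\wh{V}$-variables, in which $\cpt_\phi(L^2(\wh{V}))$, and hence $\bdd$, is an associative monoid.

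Next I would read off the last displayed isomorphism of the Theorem, namely $\alg\rtimes_{\wh{\gamma}}\wh{\wh{T}} \cong \bdd \otimes \cpt_{\overline{\phi}}(L^2(\wh{T}))$, where $\wh{\wh{T}}\cong T$ and $\wh{\gamma}$ is the dual action; the cocycle twisting the new compact operators is the associativity cocycle carried by the $\gamma$-action used to form $\alg$. The single point requiring verification is that this cocycle is trivial, so that $\cpt_{\overline{\phi}}(L^2(\wh{T}))$ collapses to the untwisted $\cpt(L^2(\wh{T}))$ --- and this is precisely the content of the Proposition, whose proof exhibits the associativity cocycle of $\gamma$ as $\phi(\eta,\zeta,\xi)$, which equals $1$ because $\xi,\eta,\zeta$ all lie in $\wh{T}\cong\Lambda^\perp$, on which $\phi$ restricts trivially. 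Combining these, and using $\wh{\wh{T}}\cong T$, yields $\alg\rtimes_{\wh{\gamma}}T \cong C_0(X,\cpt_\phi(L^2(\wh{V})))\otimes\cpt(L^2(\wh{T}))$, as claimed.

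The step I would take the most care over is the bookkeeping of the two a priori distinct sources of nonassociativity in $\alg$: the ``ambient'' one inherited from the twisted compacts $\cpt_\phi(L^2(\wh{V}))$, which simply passes unchanged into the tensor factor $\bdd$ on the right-hand side (so the right-hand side is still genuinely nonassociative, consistent with the statement), as against the one that the cocycle $u$ could have introduced through the $\gamma$-action, which is the one that would reappear as a twisting of $\cpt(L^2(\wh{T}))$ under the dual crossed product and which the Proposition annihilates. Apart from disentangling these, all that remains is the routine density argument (approximation by compactly supported continuous functions) already invoked in the proof of the Theorem, so no new analytic input is needed.
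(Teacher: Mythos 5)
Your proposal is correct and follows the paper's own route: the paper likewise specializes the Theorem to $G=\wh{T}$ (equivalently, uses the case $\psi=\phi^{-1}$, i.e.\ the ordinary crossed product by the dual action), obtaining $\bdd\otimes\cpt_{\overline{\phi}_\Lambda}(L^2(\wh{T}))$, and then invokes the Proposition's computation that the associativity cocycle of $\gamma$ --- the restriction of $\phi$ to $\Lambda^\perp\times\Lambda^\perp\times\Lambda^\perp$ --- is trivial, so the twisted compacts collapse to $\cpt(L^2(\wh{T}))$. Your extra care in separating the ambient $\phi$-twist on $\cpt_\phi(L^2(\wh{V}))$ from the potential new twist coming from $u$ is exactly the right bookkeeping and is consistent with the paper's argument.
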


\bigskip\noindent{\it Proof.}
With $G = \wh{T}$ and  $\psi=\phi^{-1}$, we have
$$
\alg\times_{\wh{\gamma}}T \cong C_0(X,\cpt_\phi(L^2(\wh{V})))\otimes \cpt_{\overline{\phi}_\Lambda}(L^2(\wh{T})),
$$
where $\phi_\Lambda$ is the restriction of $\phi$ to
$\Lambda^\perp\times\Lambda^\perp\times\Lambda^\perp$ which we
showed in the last section was trivial. Thus the result
follows. \quod

\begin{corollary} For a torus $T = V/\Lambda$ a nonassociative principal
$(T,\phi)$ bundle $\alg(X)$ satisfies
$$
\alg(X)\otimes\cpt(L^2(\wh{V})) \cong
C_0(X,\cpt_\phi(L^2(\wh{V})))\rtimes_{\wh{\alpha}} \Lambda^\perp.
$$
\end{corollary}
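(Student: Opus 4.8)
The plan is to obtain the corollary directly from the duality Theorem proved above in this section — the one whose final clause computes the double crossed product $(\,\cdot\rtimes_\alpha G)\rtimes_{\wh\alpha}\wh G$ of an ordinary (possibly nonassociative) crossed product, and is stated to hold for associative and nonassociative algebras alike — applied to the crossed product that is packaged into the very definition of a nonassociative principal bundle. Since $\alg(X)$ is an $NAP(T,\phi)$-bundle there is, by definition, an action $\alpha$ of $T$ by automorphisms of $\alg(X)$ commuting with the $C_0(X)$-action, together with an isomorphism of nonassociative algebras
$$
\alg(X)\rtimes_\alpha T \;\cong\; C_0\bigl(X,\cpt_\phi(L^2(\wh{V}))\bigr),
$$
where the unspecified space appearing in the definition is taken to be $L^2(\wh V)$, as in the statement; the action $\wh\alpha$ occurring in the corollary is the dual action of $\wh T\cong\Lambda^\perp$, transported along this isomorphism so as to act on $C_0(X,\cpt_\phi(L^2(\wh{V})))$.

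First I would use the displayed isomorphism, together with $\wh T\cong\Lambda^\perp$, to rewrite the right-hand side of the corollary as a double crossed product,
$$
C_0\bigl(X,\cpt_\phi(L^2(\wh{V}))\bigr)\rtimes_{\wh\alpha}\Lambda^\perp
\;\cong\;
\bigl(\alg(X)\rtimes_\alpha T\bigr)\rtimes_{\wh\alpha}\wh T .
$$
Next I would invoke the final clause of the duality Theorem above, with the algebra taken to be $\alg(X)$ and the group to be $G=T$, to get
$$
\bigl(\alg(X)\rtimes_\alpha T\bigr)\rtimes_{\wh\alpha}\wh T \;\cong\; \alg(X)\otimes\cpt(L^2(T)),
$$
with $\alpha$ acting on the first tensor factor, the adjoint of the right regular representation of $T$ on the second, and the double-dual action $\wh{\wh\alpha}$ of $\wh{\wh T}\cong T$. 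Finally, both $L^2(T)$ and $L^2(\wh V)$ are separable infinite-dimensional Hilbert spaces, so $\cpt(L^2(T))\cong\cpt(L^2(\wh V))$ as $C^*$-algebras, and substituting this is harmless because the $C_0(X)$-module structure and the $\phi$-nonassociativity reside entirely in the factor $\alg(X)$. Chaining the three displayed isomorphisms yields $\alg(X)\otimes\cpt(L^2(\wh V))\cong C_0(X,\cpt_\phi(L^2(\wh{V})))\rtimes_{\wh\alpha}\Lambda^\perp$, which is the assertion.

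The one substantive step, already carried by the Theorem invoked, is that this double crossed product collapses to $\alg(X)\otimes\cpt(L^2(T))$ with \emph{untwisted} compacts: crossing by $T$ with an honest action and then by $\wh T=\Lambda^\perp$ with the (honest) dual action introduces no residual associator or group $2$-cocycle beyond what is already present in $\alg(X)$. This is the nonassociative analogue of classical Takai duality, and it is precisely where the vanishing of $\phi$ on $\Lambda^\perp\times\Lambda^\perp\times\Lambda^\perp$ — equivalently, the triviality of the associativity cocycle of the $\Lambda^\perp$-action established in the previous section — is being used. Everything else, namely identifying $\wh T$ with $\Lambda^\perp$ and $\cpt(L^2(T))$ with $\cpt(L^2(\wh V))$ and tracking the dual and double-dual actions through the isomorphisms, is routine bookkeeping, so I expect no serious obstacle beyond that.
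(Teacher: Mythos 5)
Your proposal is correct and follows essentially the same route as the paper: invoke the defining isomorphism $\alg(X)\rtimes_\alpha T\cong C_0(X,\cpt_\phi(L^2(\wh V)))$, apply the final clause of the generalised Takai duality theorem to collapse $(\alg(X)\rtimes_\alpha T)\rtimes_{\wh\alpha}\wh T$ to $\alg(X)\otimes\cpt(L^2(T))$, and identify $\wh T$ with $\Lambda^\perp$ and $\cpt(L^2(T))$ with $\cpt(L^2(\wh V))$. Your closing remark attributing the collapse to the triviality of $\phi$ on $\Lambda^\perp\times\Lambda^\perp\times\Lambda^\perp$ is an interpretive gloss the paper reserves for the preceding corollary rather than this one, but it does not affect the validity of the argument.
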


\bigskip\noindent{\it Proof.} With the appropriate change of notation (and using
$\wh{\alpha}$ for both the dual action and the equivalent
action on kernels) our generalised Takai duality gives
$$
\alg(X)\otimes\cpt(L^2(T)) \cong
(\alg(X)\rtimes_\alpha T)\rtimes_{\wh{\alpha}} \wh{T} \cong
C_0(X,\cpt_\phi(L^2(\wh{V})))\rtimes_{\wh{\alpha}} \Lambda^\perp,
$$
as asserted.

In this last case the double dual action of $G$ is given by
$\wh{\wh{\alpha}}_v[F](x,\xi) = \xi(v)F(x,\xi)$, which Fourier
transforms to
\begin{eqnarray*}
\wt{\wh{\wh{\alpha}}_v[F]}(x,z) &=& \int
\xi(v)\alpha_x^{-1}[F(xz^{-1},\xi)]\xi(z)\,d\xi\\ &=& \alpha_v[\int
\alpha_{xv}^{-1}[F((xv)(zv)^{-1},\xi)]\xi(zv)\,d\xi]\\
&=&\alpha_v[\wt{F}(xv,zv)],
\end{eqnarray*}
which combines the action of $\alpha_v$ on $\alg$ with the
adjoint action of the right regular representation on kernels.
\quod

This Corollary shows that the result of \cite[Theorem
8.3]{BHM3} is valid for general $NAP(T,\phi)$-bundles, and not
just for the dual bundles considered there.

\bigskip
The strictifiction duality result would enable us to give a
more general notion of a nonassociative principle $T$-bundle by
asking that there is a $T$-action such that the strictified
crossed product with $T$ is $C(X,\cpt(L^2(T))$. However, this
is less useful than at first appears because the
nonassociativity factor is obvious from the algebra and, in any
case, is not lifted from the torus $T$.


\section{Classification of nonassociative principal torus bundles via nonassociative strict deformation quantization}
\label{sect:class}


The next step will be to classify $NAP(T,\phi)$-bundles more generally for a given $\phi$. At this point we shall see that they automatically include the noncommutative case of \cite{ENOO, BHM3}, as well as the geometric case of principal bundles.

\begin{theorem} For a given $\phi \in Z^3(\widehat T)$, each NAP$(T,\phi)$-bundle is associated to an element $\sigma \in C(X,Z^2(\wh{T}))$ and a principal $T$-bundle $E$ over $X$. Conversely these data can be used to construct an associated NAP$(T,\phi)$-bundle.
\end{theorem}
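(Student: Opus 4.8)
The plan is to mimic the classification of noncommutative principal torus bundles in \cite{ENOO, HM10}, adapting it to carry the fixed associator $\phi$. Given an NAP$(T,\phi)$-bundle $\alg(X)$, the defining isomorphism $\alg(X)\rtimes_\gamma T \cong C_0(X,\cpt_\phi)$ lets us recover $\alg(X)$, up to stabilization, as a crossed product by the dual: applying the reverse/strictified Takai duality from Section~\ref{sect:strictify} (specifically the Corollary giving $\alg(X)\otimes\cpt(L^2(\wh V)) \cong C_0(X,\cpt_\phi(L^2(\wh V)))\rtimes_{\wh\alpha}\Lambda^\perp$), we see that every NAP$(T,\phi)$-bundle is, after stabilizing, a Leptin--Busby--Smith crossed product $C_0(X,\cpt_\phi(L^2(\wh V)))\rtimes_{\beta,\sigma}\wh T$ for some action $\beta$ of $\wh T \cong \Lambda^\perp$ and an $\wh T$-valued (or algebra-valued) 2-cocycle $\sigma$ whose associated associativity cocycle must, by the Proposition in Section~\ref{sect:natorus} (trivial associativity cocycle computation), be compatible with $\phi$. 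The first step, then, is to extract from $\beta$ its Mackey-type invariants: the exterior-equivalence class of $\beta$ determines a principal $T$-bundle $E$ over $X$ (the obstruction to $\beta$ being implemented by a genuine $T$-action on $C_0(X)\otimes\cpt$, i.e.\ the class in $H^1(X;\underline{T})$), exactly as in \cite{ENOO}; and the cocycle $\sigma$, well-defined modulo coboundaries once $\beta$ is fixed, gives the element $\sigma \in C(X,Z^2(\wh T))$ (or more precisely its class, but the statement is phrased at cocycle level). One checks that changing the isomorphism or the stabilization changes $(\beta,\sigma)$ only by the allowed equivalences, so the pair $(E,[\sigma])$ is a genuine invariant.

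For the converse, given a principal $T$-bundle $E\to X$ and $\sigma \in C(X,Z^2(\wh T))$, one builds the associated bundle as follows. The principal $T$-bundle $E$ has an associated action $\beta$ of $\wh T$ by automorphisms of $C_0(X,\cpt)$ (the "Mackey" action obtained by combining the $\wh T$-action on $C_0(\wh V)$-modules coming from $E$ with the standard action on compacts), and $\sigma$ twists this via the Leptin--Busby--Smith cocycle identities, where one must also incorporate the algebra-valued multiplier $u$ built from $\phi$ so that $\gamma_x\circ\gamma_y = \ad(u(x,y)\sigma(x,y))\circ\gamma_{xy}$ and $\phi(x,y,z)\,u(x,y)\sigma(x,y)\,u(xy,z)\sigma(xy,z) = \gamma_x[u(y,z)\sigma(y,z)]\,u(x,yz)\sigma(x,yz)$ hold. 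Then set
$$
\alg(X) := C_0(X,\cpt_\phi(L^2(\wh V)))\rtimes_{\beta,u\sigma}\wh T,
$$
and verify it is an NAP$(T,\phi)$-bundle: it carries the dual $T$-action $\wh\beta$, commutes with the $C_0(X)$-action by construction, and by the general Takai-type Theorem of Section~\ref{sect:strictify} (with $\psi = \phi^{-1}$, using that $\phi$ restricted to $\Lambda^\perp\times\Lambda^\perp\times\Lambda^\perp$ is trivial) its crossed product by $T$ is $C_0(X,\cpt_\phi)\otimes\cpt(L^2(\wh T))$, i.e.\ $C_0(X,\cpt_{\phi'})$ for appropriate twisted compacts, as required by the definition. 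Finally one confirms that starting from an NAP$(T,\phi)$-bundle, extracting $(E,\sigma)$, and reconstructing returns the original bundle up to isomorphism — this is where the two directions are matched.

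The main obstacle I expect is the bookkeeping that keeps the associator $\phi$ fixed throughout: the cocycle $\sigma$ and the $\phi$-multiplier $u$ both contribute to the associativity cocycle of the twisted crossed product, and one must show (as in the Proposition of Section~\ref{sect:natorus}) that the $\sigma$-contribution is trivial \emph{as a 3-cocycle on $\Lambda^\perp$} so that the net nonassociativity is precisely $\phi$ and not some $\phi$ modified by $\sigma$ — this is exactly the subtlety flagged in the remark on "rigidity" at the end of Section~\ref{sect:natorus}, namely that $\phi|_{\Lambda^\perp\times\Lambda^\perp\times\Lambda^\perp}$ must be trivial and that $\sigma$ lives at the level of $Z^2(\wh T)$ rather than arbitrary algebra-valued cocycles. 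A secondary technical point is verifying that the equivalence relations (exterior equivalence of actions, coboundaries of $\sigma$, Morita/stable isomorphism of bundles) correspond correctly under the two constructions; but this parallels \cite{ENOO, HM10} closely enough that it should be routine modulo the nonassociative decorations.
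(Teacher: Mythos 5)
Your proposal is essentially correct, and the forward direction follows the paper's own route almost exactly: both you and the paper use the stabilized Takai-type Corollary of Section~\ref{sect:strictify} to present $\alg(X)$ (up to tensoring with compacts) as a Leptin--Busby--Smith crossed product $C_0(X,\cpt_\phi(L^2(\wh V)))\rtimes_{\gamma,u}\Lambda^\perp$, and both extract $\sigma$ from the limited freedom in the multiplier and $E$ from the residual freedom measured by $H^1(X,\wh T)$. The paper is slightly more explicit than you are on one point worth internalizing: it derives $\sigma$ concretely as the ratio $u_1u_2^{-1}$ of two admissible multipliers, observes that the first cocycle identity forces this ratio to be central (hence in $C_b(X)$) and $\gamma$-invariant, and then reads off from the second ($\phi$-twisted) identity that $\sigma$ is an \emph{ordinary} $C_b(X)$-valued $2$-cocycle --- which is precisely why the associator stays equal to $\phi$ and is not polluted by $\sigma$; this settles the ``main obstacle'' you flag at the end without further work. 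Where you genuinely diverge is the converse. You rebuild the bundle as a twisted crossed product $C_0(X,\cpt_\phi(L^2(\wh V)))\rtimes_{\beta,u\sigma}\wh T$ with the combined multiplier $u\sigma$ and then appeal to the Takai-type theorem to verify the defining property; this works (centrality of $\sigma$ makes $\ad(u\sigma)=\ad(u)$ and the cocycle identities multiply correctly), but it lands you on the stabilized side of the duality and obliges you to check the NAP$(T,\phi)$ axioms after the fact. The paper instead constructs the bundle directly as an iterated deformation quantization $\left(C_0(E)_\sigma\right)_\phi$: first the parametrised strict deformation quantization of $C_0(E)$ by $\sigma$ from \cite{HM09,HM10}, yielding a noncommutative principal torus bundle, then the nonassociative strict deformation quantization of Section~\ref{sect:natorus} applied with the $3$-cocycle $\phi$. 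The paper's route buys a construction that manifestly produces a $T$-algebra over $X$ with the prescribed associator and ties the classification to the deformation-quantization framework that is the point of the paper; your route buys a more direct match with the crossed-product picture used in the forward direction, which would make the round-trip verification (that extraction followed by reconstruction recovers the original bundle) somewhat more transparent. Both are legitimate; neither direction of your argument contains a gap that would invalidate it.
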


\bigskip\noindent{\it Proof.}
When $\phi \equiv 1$ this is basically the same as Theorem 2.2 of \cite{ENOO} (without the technical conditions such as compactly generated and second countability which are automatic for tori and their duals), and much of the proof simply follows the same lines, so we shall
concentrate on the new aspects introduced by the nonassociativity.

The definition of an $NAP(T,\phi)$-bundle includes $\phi$ and the fact that there is an action, $\alpha$, of $T$. The Corollary tells us that $\alg(X)$ is stably equivalent to $C_0(X,\cpt_\phi(L^2(\wh{V}))\rtimes_{\gamma} \Lambda^\perp$, where $\gamma_\lambda = \wh{\alpha}$ is the dual action of $\Lambda^\perp \cong \wh{T}$ action. The action of $\lambda \in \Lambda^\perp$ on the twisted compact operators is given by $$
\gamma_\lambda[K](\xi,\eta) = \phi(\lambda,\xi,\eta)^{-1}K(\lambda^{-1}\xi,\lambda^{-1}\eta)
$$
and has a multiplier $u(\lambda,\mu)(\xi) = \phi(\lambda,\mu,\xi)$, \cite{BHM3}. The cocycle $u$ satisfies
\begin{eqnarray*}
\gamma_\lambda\circ\gamma_\mu &=&
\ad(u(\lambda,\mu))\circ\gamma_{\lambda\mu},\\ \phi(\lambda,\mu,\nu)
u(\lambda,\mu)u(\lambda\mu,\nu) &=&
\gamma_\lambda[u(\mu,\nu)]u(\lambda,\mu\nu).
\end{eqnarray*}
Other multipliers $u$ can satisfy these relations, but the freedom in the choice is very limited since, in the first condition, two choices $u_1$ and $u_2$ will give
$$
\ad(u_1(\lambda,\mu))\circ\gamma_{\lambda\mu}
= \gamma_\lambda\circ\gamma_\mu = \ad(u_2(\lambda,\mu))\circ\gamma_{\lambda\mu},
$$
forcing $\sigma(\lambda,\mu) = u_1(\lambda,\mu)u_2(\lambda,\mu)^{-1}$ to be central, that is in $C_b(X)$, and so also $\gamma$-invariant, since, by definition, $\gamma$ commutes with the action of  $C_b(X)$.  The centrality and invariance of $\sigma$ allow us to make a comparison
of the other conditions $\phi(\lambda,\mu,\nu) u_j(\lambda,\mu)u_j(\lambda\mu,\nu) =
\gamma_\lambda[u_j(\mu,\nu)]u_j(\lambda,\mu\nu)$, for $j = 1,2$, which tells us that $\sigma$ is an ordinary $C_b(X)$-valued cocycle: $\sigma(\lambda,\mu)\sigma(\lambda\mu,\nu) =
\sigma(\mu,\nu)\sigma(\lambda,\mu\nu)$. Moreover, since $u_1$ and $u_2$ are unitary so is $\sigma$. We then argue as in \cite[Section 2]{ENOO} or \cite{HM10} to see that the only remaining freedom is given by $H^1(X,\wh{T})$, which is the choice of an ordinary principal
$T$-bundle $E$ over $X$. One can also argue as in \cite[Theorem 7.2]{ENOO} that $\sigma$ homotopic to a constant gives rise to K-trivial bundles. 

Conversely, suppose that we are given the data as above, $(\phi, \sigma, T\to E\to X)$. Consider the abelian $T-C^*$-algebra of continuous 
functions on $E$ vanishing at infinity, $C_0(E)$. By the construction in \cite{HM09, HM10}, we can parametrize strict deform quantize this $T-C^*$-algebra using
$\sigma$ as deformation parameter, to get a
new $T-C^*$-algebra denoted by $C_0(E)_\sigma$, which is a noncommutative principal torus bundle. 
By the construction in section 1, we can deform the noncommutative principal torus bundle  $C_0(E)_\sigma$ by $\phi$
to get the desired $NAP(T,\phi)$-bundle  $\left(C_0(E)_\sigma\right)_\phi$.
\quod

\appendix
\section{Octonions as a nonassociative strict deformation quantization
and nonassociative principal bundle}

The nonassociative real algebra of octonions can also be regarded as a
nonassociative principal $\ZZ_2^3$-bundle over a one point space $X$. As noted
in \cite[pp 669-670]{BHM3,BHM4,B,M}, it is known that the octonions ${\mathbb
O}$ can be described as a twisted group algebra of $\wh{T} = \ZZ_2^3$, which is
generated by $\{e({\bf a}): {\bf a} \in \ZZ_2^3\}$ so that
$$ e({\bf a})e({\bf b}) = u({\bf a},{\bf b})e({\bf a}+{\bf b}),
$$ where
$$ u({\bf a},{\bf b}) = (-1)^{\sum_{i<j} a_ib_j + a_1a_2b_3 + a_3a_1b_2 + a_2a_3b_1}.
$$
Since everything is real, this realisation of the octonions can also be
identified with a real Leptin--Busby--Smith crossed product of $\RR$ by $\wh{T}
= \ZZ_2^3$, acting as trivial automorphisms $\beta({\bf a})$, and
algebra-valued multiplier $u$, whose adjoint action is trivial and so
consistent with $\beta$. The nonassociativity is evident since
\begin{eqnarray*}
(e({\bf a})e({\bf b}))e({\bf c}) &=& u({\bf a},{\bf b})e({\bf a}+{\bf b})e({\bf c})\\
&=& u({\bf a},{\bf b})u({\bf a}+{\bf b},{\bf c})e({\bf a}+{\bf b}+{\bf c})\\
&=& \frac{u({\bf a},{\bf b})u({\bf a}+{\bf b},{\bf c})} {u({\bf a},{\bf b}+{\bf
c})u({\bf b},{\bf c})} e({\bf a})(e({\bf b})e({\bf c})),
\end{eqnarray*}
and the factor $\phi$ which measures the nonassociativity can be calculated to
be
$$
\phi({\bf a},{\bf b},{\bf c}) = \frac{u({\bf a},{\bf b}+{\bf c})u({\bf b},{\bf
c})} {u({\bf a},{\bf b})u({\bf a}+{\bf b},{\bf c})} = (-1)^{{\bf a}.({\bf
b}\times{\bf c})}.
$$
This is a finite group version of Example 3.2 with $\{U(\xi):\xi\in \wh{T}\}$ replaced by $\{e({\bf
a}):{\bf a}\in \ZZ_2^3\}$, and there is an action of $\wh{\beta}$ of ${\bf p}
\in T = \wh{\ZZ_2^3} \cong \ZZ_2^3$ by
$$
\wh{\beta}({\bf p})[e({\bf a})] = (-1)^{{\bf p}\cdot{\bf a}} e({\bf a}).
$$
Since this action of the dual group involves only real-valued characters, and
the constructions in \cite[Theorem 9.2]{BHM3} are independent of the field, we
can produce a real version of the arguments leading to Theorem 4.3, and
construct the twisted crossed product ${\mathbb O}\rtimes^\phi \ZZ_2^3$ which,
by the real version of Theorem 4.3, is $\RR\otimes \cpt(\ell^2(\ZZ_2^3))$.
Since $\cpt(\ell^2(\ZZ_2^3))$ can be identified withe the matrix algebra
$M_8(\RR)$. Theorem 4.3 also tells us that the crossed product ${\mathbb
O}\rtimes^\phi \ZZ_2^3$ is the $\phi$-nonassociative matrix algebra. (See also
the comment at the end of \cite[Appendix 1]{BHM4}.) This means that the
octonions are an $NAP(\ZZ_2^3,\phi)$-bundle over a point.

\end{document}